\NewDocumentCommand{\ceil}{s O{} m}{%
	\IfBooleanTF{#1} 
	{\left\lceil#3\right\rceil} 
	{#2\lceil#3#2\rceil} 
}
\NewDocumentCommand{\floor}{s O{} m}{%
	\IfBooleanTF{#1} 
	{\left\lfloor#3\right\rfloor} 
	{#2\lfloor#3#2\rfloor} 
}
\newcommand{\Mod}[1]{\ (\mathrm{mod}\ #1)}
\newtheoremstyle{dot}{}{}{\itshape}{}{\bfseries}{}{ }{}
\newtheorem{thm}{Theorem}[section]
\newtheorem{lem}[thm]{Lemma}
\theoremstyle{definition}
\newtheorem{defi}[thm]{Definition}
\newtheorem{remark}[thm]{Remark}
\numberwithin{equation}{section}
\begin{document}

\begin{center}
{\Large\textbf{On Exactly $3$-Deficient-Perfect Numbers}}
\end{center}
\vspace{0.5cm}

\begin{center}
Saralee Aursukaree and Prapanpong Pongsriiam$^*$
\vspace{0.25cm}

Department of Mathematics, Faculty of Science, Silpakorn University, Nakhon Pathom, 73000, Thailand
\vspace{0.25cm}

aursukaree.s@gmail.com, prapanpong@gmail.com, pongsriiam\_p@silpakorn.edu

\end{center}
\vspace{1cm}

\begin{abstract}
	Let $n$ and $k$ be positive integers and $\sigma(n)$ the sum of all positive divisors of $n$. We call $n$ an exactly $k$-deficient-perfect number with deficient divisors $d_1, d_2, \ldots, d_k$ if $d_1, d_2, \ldots, d_k$ are distinct proper divisors of $n$ and $\sigma (n)=2n-(d_1+d_2+\ldots + d_k)$.  In this article, we show that the only odd exactly $3$-deficient-perfect number with at most two distinct prime factors is $1521=3^2 \cdot 13^2$.
\end{abstract}

2010 Mathematics Subject Classification: Primary 11A25; Secondary 11B83

keywords: perfect number, deficient-perfect number, near-perfect number, the sum of divisor function, divisor

\section{Introduction}
Throughout this article, let $n$ be a positive integer, $\sigma (n)$ the sum of all positive divisors of $n$, and $\omega (n)$ the number of distinct prime factors of $n$. We say that $n$ is perfect if $\sigma(n)=2n$. It is well-known that $n$ is even and perfect if and only if $n=2^{p-1}\left(2^p -1\right)$ where $p$ and $2^p-1$ are primes. It has also been a long standing conjecture that there are infinitely many even perfect numbers and that an odd perfect number does not exist. Attempting to understand perfect numbers, mathematicians have studied other closedly related concepts. Recall that if $\sigma(n)<2n$, then $n$ is said to be deficient; if $\sigma(n) >2n$, then $n$ is abundant; if $\sigma (n)=2n+1$, then $n$ is quasiperfect; if $\sigma (n)=2n-1$, then $n$ is almost perfect. For more information on this topic, see for example in the work of Cohen \cite{C80,Co}, Hagis and Cohen \cite{Ha}, Kishore \cite{Kis}, Ochem and Rao \cite{OcRa}, Yamada \cite{Yam}, and in the online databases GIMPS \cite{GIMPS} and OEIS \cite{Slo}.

Sierpi\'nski \cite{Si} called $n$ pseudoperfect if $n$ can be written as a sum of some of its proper divisors. Pollack and Shevelev \cite{Pol} have recently initiated the study of a subclass of pseudoperfect numbers leading to an active investigation. We summarize it in the following definition.

\begin{defi}
	Let $n$ and $k$ be positive integers. We say that $n$ is near-perfect if $n$ is the sum of all of its proper divisors except one of them. In addition, $n$ is $k$-near-perfect if $n$ can be written as a sum of all of its proper divisors with at most $k$ exceptions. Moreover, $n$ is exactly $k$-near-perfect if $n$ is expressible as a sum of all of its proper divisors with exactly $k$ exceptions. The exceptional divisors are said to be redundant. In other words,
	\begin{align*}
	&	\text{$n$ is near-perfect with a redundant divisor $d$} \Leftrightarrow \text{$1\leq d<n $, $d\mid n $, and $\sigma(n)=2n+d$;}\\
	&	\text{$n$ is $1$-near-perfect} \Leftrightarrow \text{$n$ is perfect or $n$ is near-perfect;}\\
	&	\text{$n$ is exactly $k$-near-perfect with redundant divisors $d_1, d_2, \ldots, d_k$} \Leftrightarrow \\
	&\text{ $d_1,d_2,\ldots,d_k$ are distinct proper divisors of $n$ and $\sigma(n) = 2n+d_1+d_2+\cdots+d_k$.}
	\end{align*}
\end{defi}
Motivated by the concept of near-perfect numbers, Tang, Ren, and Li \cite{TRL} define the notion of deficient-perfect numbers which also leads to an interesting research problem.
\begin{defi}
	Let $n, k \in \mathbb{N}$. Then $n$ is called a deficient-perfect number with a deficient divisor $d$ if $d$ is a proper divisor of $n$ and $\sigma(n)=2n-d$. Furthermore, $n$ is exactly $k$-deficient-perfect with deficient divisors $d_1, d_2, \ldots, d_k$ if $d_1, d_2, \ldots, d_k$ are distinct proper divisors of $n$ and  $\sigma(n)=2n-(d_1+d_2+\cdots+d_k)$. In addition, $n$ is $k$-deficient-perfect if $n$ is perfect or $n$ is exactly $\ell$-deficient-perfect for some $\ell = 1, 2, \ldots, k$.
\end{defi}
In 2012, Pollack and Shevelev \cite{Pol} showed that the number of near-perfect numbers not exceeding $x$ is $\ll x^{5/6+o(1)}$ as $x \rightarrow \infty $, and that if $k$ is fixed and is large enough, then there are infinitely many exactly $k$-near-perfect numbers. A year later, Ren and Chen \cite{RenC} determined all near-perfect numbers $n$ which have $\omega (n)=2$ and we can see from this classification that all such $n$ are even. In the same year, Tang, Ren, and Li \cite{TRL} proved that there is no odd near-perfect number $n$ with $\omega (n)=3$ and found all deficient-perfect numbers $m$ with $\omega (m) \leq 2$. After that, Tang and Feng \cite{TF} extended it by showing that there is no odd deficient-perfect number $n$ with $\omega (n)=3$. Tang, Ma, and Feng \cite{TMF} obtained in 2016 the only odd near-perfect number with $\omega (n)=4$, namely, $n=3^4\cdot 7^2 \cdot 11^2 \cdot 19^2$, while Sun and He \cite{SunHe} asserted in 2019 that the only odd deficient-perfect number $n$ with $\omega (n)=4$ is $n=3^2 \cdot 7^2\cdot 11^2 \cdot 13^2$. Cohen et.al \cite{Cohen} have recently improved the estimate of Pollack and Shevelev \cite{Pol} on the number of near-perfect numbers $\leq x$. Hence, most results in the literature are devoted to characterizing, only when $k=1$, the exactly $k$-near-perfect or exactly $k$-deficient-perfect numbers. Chen \cite{Chen} started a slightly new direction by determining all $2$-deficient-perfect numbers $n$ with $\omega (n) \leq 2$.

In this article, we continue the investigation on odd $3$-deficient-perfect numbers $n$ with $\omega (n) \leq 2$. We obtain that the only such $n$ is $n=1521=3^2 \cdot 13^2$. For other articles related to the divisor functions or divisibility problems, see for example in \cite{BGLLHT, BLMS, CCZ, Cub, DPT, JPo, Khan, LQu, LSZ, LYo, OPo, PPo, Pon19.1, Pon19.2, Pon14.1, PVa, PVa2, SPa, Ste, Yam}.

\section{Main Results}
By the definition, $n$ is deficient-perfect if and only if $n$ is exactly $1$-deficient-perfect. Tang and Feng \cite[Lemma 2.1]{TF} show that if $n$ is deficient-perfect and $n$ is odd, then $n$ is a square. We can extend their result to the following form.

\begin{lem} \label{lemma1}
	Let $n$ and $k$ be positive integers. Suppose that  $n$ is exactly  $k$-deficient-perfect and $n$ is odd. Then $n$ is a square if and only if $k$ is odd. In particular, if $n$ is odd and exactly $3$-deficient-perfect, then $n$ is a square.
\end{lem}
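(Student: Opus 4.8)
The plan is to analyze the parity of both sides of the defining equation $\sigma(n) = 2n - (d_1 + d_2 + \cdots + d_k)$ under the assumption that $n$ is odd. The key fact is a classical parity criterion for the divisor sum: for a positive integer $n$, the quantity $\sigma(n)$ is odd if and only if $n$ is a square or twice a square. Since $n$ is odd here, this reduces to the statement that $\sigma(n)$ is odd precisely when $n$ is a perfect square.

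First I would reduce everything modulo $2$. Because $n$ is odd, $2n$ is even, so modulo $2$ the defining relation becomes
\est{
\sigma(n) \equiv d_1 + d_2 + \cdots + d_k \Mod{2}.
}
Next I would observe that each deficient divisor $d_i$ is a divisor of the odd number $n$, hence each $d_i$ is itself odd. Therefore the sum $d_1 + \cdots + d_k$ is congruent to $k$ modulo $2$, giving
\est{
\sigma(n) \equiv k \Mod{2}.
}
Combining this with the parity criterion for $\sigma$, I conclude that $n$ is a square if and only if $\sigma(n)$ is odd, which holds if and only if $k$ is odd. This establishes the biconditional. The final assertion then follows immediately by specializing to $k=3$, which is odd, so an odd exactly $3$-deficient-perfect number must be a square.

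I do not anticipate a serious obstacle in this argument; the whole proof rests on the parity characterization of $\sigma(n)$ together with the elementary fact that every divisor of an odd number is odd. The only point requiring a little care is the justification of the parity criterion itself. If it is not quoted as a known result, I would supply a short proof: writing $n = \prod p_i^{a_i}$ with all $p_i$ odd, one has $\sigma(n) = \prod_i (1 + p_i + \cdots + p_i^{a_i})$, and each factor is a sum of $a_i + 1$ odd terms, hence is odd exactly when $a_i + 1$ is odd, i.e.\ when $a_i$ is even. Thus $\sigma(n)$ is odd if and only if every exponent $a_i$ is even, which is exactly the condition that $n$ be a perfect square. This handles the characterization cleanly in the odd case and completes the lemma.
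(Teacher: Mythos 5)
Your proposal is correct and takes essentially the same approach as the paper: both reduce $\sigma(n)=2n-(d_1+\cdots+d_k)$ modulo $2$, use that every divisor of an odd $n$ is odd to get $\sigma(n)\equiv k \Mod{2}$, and then identify the parity of $\sigma(n)$ through the factorization $\sigma(n)=\prod_i\left(1+p_i+\cdots+p_i^{\alpha_i}\right)$, where each factor is a sum of $\alpha_i+1$ odd terms. The only cosmetic difference is that the paper compresses this into the single congruence $k\equiv\prod_i(\alpha_i+1) \Mod{2}$, while you route it through the classical criterion that $\sigma(n)$ is odd precisely when $n$ is a square or twice a square, proved by the identical computation.
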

\begin{proof}
	Since $1$ has no proper divisor, we can assume that $n>1$ and write $n=p_1^{\alpha_1}p_2^{\alpha_2}\cdots p_r^{\alpha_r}$ where $p_1, \ldots, p_r$ are distinct odd primes and $\alpha_1, \alpha_2, \ldots, \alpha_r$ are positive integers. Let $d_1, d_2, \ldots, d_k$  be distinct proper divisors of $n$ such that
	\begin{equation}
		2n-d_1-d_2-\cdots-d_k=\sigma(n)=\prod_{i=1} ^r \sigma(p_i^{\alpha_i}) = \prod _{i=1} ^r (1+p_i+\cdots+p_i^{\alpha_i}). \label{lem1}
	\end{equation} 
	Since $n$ is odd, $d_i$ and $p_j$ are odd for every $i=1, 2, \ldots, k$ and $j= 1, 2, \ldots, r$. Reducing \eqref{lem1} $\bmod$ $2$, we obtain $k \equiv \prod _{i=1} ^r (\alpha_i+1) \Mod{2}$. From this, we have the equivalence $k$ is odd $\Leftrightarrow$ $\alpha_i$ is even for all $i$ $\Leftrightarrow$ $n$ is a square, which proves our lemma.
\end{proof}

Tang, Ren, and Li \cite{TRL} determine all deficient-perfect numbers $n$ with $\omega (n)\leq 2$. In particular, they show that if $\omega (n)=1$ and $n$ is deficient-perfect, then $n$ is a power of $2$. We can extend this for exactly $k$-deficient-perfect numbers as follows.

\begin{lem}
	Let $n\geq 2$, $k \geq 1$ be integers. If $n$ is exactly $k$-deficient-perfect and $\omega(n)=1$, then $k=1$ and $n$ is a power of $2$. Consequently, if $n$ is exactly $k$-deficient-perfect and $k\geq2$,  then $n$ has at least two distinct prime divisors. In particular, every exactly $3$-deficient-perfect number $n$ has $\omega (n) \geq 2$.
\end{lem}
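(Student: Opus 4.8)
The plan is to reduce the problem to the arithmetic of prime powers. Since $\omega(n)=1$, write $n=p^{\alpha}$ with $p$ prime and $\alpha\geq 1$; its proper divisors are exactly the powers $1,p,\dots,p^{\alpha-1}$. First I would compute the deficiency $2n-\sigma(n)$ in closed form. Using $\sigma(p^{\alpha})=\frac{p^{\alpha+1}-1}{p-1}$, a short manipulation gives
\[
2n-\sigma(n)=\frac{p^{\alpha}(p-2)+1}{p-1}.
\]
By the definition of exactly $k$-deficient-perfect, $n$ having this property means this quantity equals a sum $d_1+\cdots+d_k$ of $k$ distinct proper divisors of $n$. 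So the whole question becomes: when is $\frac{p^{\alpha}(p-2)+1}{p-1}$ expressible as a sum of distinct elements of $\{1,p,\dots,p^{\alpha-1}\}$?

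The key step is a single size comparison. The largest possible such sum is obtained by taking all proper divisors, namely $1+p+\cdots+p^{\alpha-1}=\frac{p^{\alpha}-1}{p-1}$. I would then compare the required value to this maximum: a direct calculation shows $\frac{p^{\alpha}(p-2)+1}{p-1}>\frac{p^{\alpha}-1}{p-1}$ is equivalent to $p^{\alpha}(p-3)>-2$, which holds for every odd prime $p\geq 3$ and every $\alpha\geq 1$. Hence, when $p$ is odd, the deficiency strictly exceeds the sum of all proper divisors, so it cannot be written as a sum of distinct proper divisors at all, and $n$ cannot be exactly $k$-deficient-perfect for any $k$. This forces $p=2$.

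With $p=2$ the formula collapses to $2n-\sigma(n)=1$. Since the only way to write $1$ as a sum of distinct positive proper divisors is $1$ itself (any two distinct positive divisors already sum to at least $3$), and $1$ is indeed a proper divisor of $n=2^{\alpha}$ for $\alpha\geq 1$, we must have $k=1$ and $d_1=1$. This proves the main assertion. The remaining claims follow immediately by contraposition: if $k\geq 2$ then $\omega(n)\neq 1$, and since $n\geq 2$ forces $\omega(n)\geq 1$, we obtain $\omega(n)\geq 2$; specializing to $k=3$ yields the final ``in particular'' statement.

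I expect no serious obstacle here. The only points requiring care are the algebraic simplification of $2n-\sigma(n)$ and verifying that the comparison inequality is strict uniformly for all $p\geq 3$ and $\alpha\geq 1$, rather than merely checking a few small cases.
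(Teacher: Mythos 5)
Your proposal is correct and follows essentially the same route as the paper: both reduce to $n=p^{\alpha}$, use a size comparison to show that for odd $p$ the required deficiency $2n-\sigma(n)$ exceeds any possible sum of distinct proper divisors (forcing $p=2$), and then observe that $2n-\sigma(n)=1$ gives $k=1$ and $d_1=1$. The only cosmetic difference is that you compare against the sum of \emph{all} proper divisors $\frac{p^{\alpha}-1}{p-1}$, while the paper bounds the sum of the $k$ deficient divisors by the $k$ largest proper divisors $p^{\alpha-1}+\cdots+p^{\alpha-k}$; both yield the same contradiction.
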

\begin{proof}
	Suppose $n=p^\alpha$ and the deficient divisors of $n$ are $d_1=p^{\beta_1}$, $d_2=p^{\beta_2}, \ldots$, $d_k=p^{\beta_k}$, where $\alpha >\beta _1 >\beta _2 > \cdots > \beta _k \geq 0 $. Since $(p^{\alpha+1}-1)/(p-1)=\sigma(n)=2n-d_1-\cdots-d_k$, we obtain
	\begin{equation}
	(d_1+d_2+\cdots+d_k)(p-1)-1=p^\alpha(p-2). \label{lem2}
	\end{equation}
	If $p\geq 3$, then 
	\begin{align*}
	p^\alpha &\leq p^{\alpha }(p-2) =(d_1+d_2+\cdots+d_k)(p-1)-1\\
	&\leq (p^{\alpha-1}+p^{\alpha-2}+\cdots +p^{\alpha-k})(p-1)-1 = p^\alpha -p^{\alpha-k}-1,
	\end{align*}
	which is impossible. Therefore $p=2$ and $n$ is a power of $2$. By \eqref{lem2}, we also obtain, $d_1+\cdots+d_k=1$, which implies $k=1$ and $\beta_1=0$.
\end{proof}

We now give the main result of this paper.
\begin{thm}
	The only odd exactly $3$-deficient-perfect number which has $\omega (n)=2$ is $1521=3^2\cdot 13^2$ with three deficient divisors $d_1=507$, $d_2=117$, and $d_3=39$.
\end{thm}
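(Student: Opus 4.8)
The plan is to combine the metric constraints forced by deficiency with a finer analysis of the exact divisor equation. First I would record the structural reduction. By Lemma~\ref{lemma1} an odd exactly $3$-deficient-perfect number is a perfect square, and since $\omega(n)=2$ I may write $n=p^{2a}q^{2b}$ with $3\le p<q$ odd primes and $a,b\ge1$. Because $d_1,d_2,d_3$ are positive, $n$ is deficient, so the governing identity is
\[
2-\frac{\sigma(n)}{n}=\frac{d_1+d_2+d_3}{n}=\frac1{e_1}+\frac1{e_2}+\frac1{e_3},
\]
where $e_i=n/d_i$ are three distinct divisors of $n$ exceeding $1$; hence the right-hand side is at most $\frac1{f_1}+\frac1{f_2}+\frac1{f_3}$, with $f_1<f_2<f_3$ the three smallest divisors of $n$ that exceed $1$. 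On the other side, $\frac{\sigma(n)}{n}<\frac{p}{p-1}\cdot\frac{q}{q-1}$.

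Second, I would force $p=3$. If $p\ge5$, the three smallest divisors exceeding $1$ are at least $5,7,25$ (the extreme case being $p=5$, $q=7$), so the displayed right-hand side is at most $\frac15+\frac17+\frac1{25}<0.39$, giving $\frac{\sigma(n)}{n}>1.61$; but $\frac{p}{p-1}\cdot\frac{q}{q-1}\le\frac54\cdot\frac76<1.46$, a contradiction. Hence $p=3$, so $f_1=3$, $f_2=9$ (once $q\ge11$), and $f_3=\min(q,27)$. With these values the two bounds become $\frac{\sigma(n)}{n}\ge\frac{14}{9}-\frac1{f_3}$ and $\frac{\sigma(n)}{n}<\frac32\cdot\frac{q}{q-1}$, and comparing them bounds $q$ by an explicit constant (one finds $q\le79$), leaving only finitely many primes.

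The subtle point, and the main obstacle, is that these size estimates do \emph{not} by themselves bound the exponents $a,b$: enlarging an exponent only increases $\frac{\sigma(n)}{n}$, which leaves the deficiency inequality slack, so arbitrarily large exponents pass every purely metric test. Therefore, for each surviving $q$ I would pass to the exact equation $\sigma(3^{2a})\,\sigma(q^{2b})=2\cdot3^{2a}q^{2b}-(d_1+d_2+d_3)$ and extract divisibility information: reducing modulo $q$ (where $\sigma(q^{2b})\equiv1$) and modulo powers of $3$, and comparing the $q$-adic and $3$-adic valuations of the two sides, so as to pin down the admissible exponents $a,b$. This congruence-and-valuation step, especially for the borderline larger primes $q$ where the exponent of $3$ can be large, is where I expect the real work to lie.

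Finally, each of the finitely many remaining triples $(q,a,b)$ yields a concrete $n$, for which one computes $2n-\sigma(n)$ and tests whether it is a sum of three distinct proper divisors of $n$. I expect all candidates to fail except $n=3^2\cdot13^2=1521$, where $2n-\sigma(n)=3042-2379=663=507+117+39$ with $507=3\cdot13^2$, $117=3^2\cdot13$, and $39=3\cdot13$ distinct proper divisors, and one verifies that this is the only admissible representation. Thus the theorem reduces to the reduction in the first two paragraphs, a finite list of primes from the third, and the Diophantine verification of the last; the delicate part is controlling the exponents via valuations rather than via size.
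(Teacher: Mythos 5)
Your first two paragraphs reproduce the paper's opening moves exactly: Lemma \ref{lemma1} gives $n=3^{2a}q^{2b}$, and the inequality $2<\frac{\sigma(n)}{n}+\frac{1}{D_1}+\frac{1}{D_2}+\frac{1}{D_3}$ (with $D_i=n/d_i$) forces the smaller prime to be $3$ and then $5\le q\le 79$. The gap is in what you do next. You propose, for each surviving $q$, to pin down the exponents $a,b$ by reducing the exact equation modulo $q$ and modulo powers of $3$ and comparing valuations, and then to finish by testing finitely many concrete $n$. But every such congruence still contains the three \emph{unknown} divisors $d_1,d_2,d_3$ (each of the form $3^{x}q^{y}$ with $x\le 2a$, $y\le 2b$): reducing mod $q$ gives, for instance, $\sigma(3^{2a})\equiv -(d_1+d_2+d_3) \pmod{q}$, one congruence in four unknowns, and nothing in it bounds $a$ or $b$. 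Moreover no congruence argument can rule out large exponents uniformly, since for $q=13$ a genuine solution exists, so the method must simultaneously locate the divisors --- which your plan never does. You have labelled precisely the core of the proof ``where I expect the real work to lie'' without supplying the idea that makes it work.

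That idea, in the paper, is to use the metric inequality a \emph{second} time, on the $D_i$ themselves rather than only to bound $q$. Since $\frac{\sigma(n)}{n}<\frac{3}{2}\cdot\frac{q}{q-1}$ is barely above $\frac{3}{2}$, the sum $\frac{1}{D_1}+\frac{1}{D_2}+\frac{1}{D_3}$ must be nearly $\frac{1}{2}$, which forces the $D_i$ to be among the very smallest divisors of $n$ \emph{independently of the exponents}: the paper proves $D_1=3$ whenever $q\ge 11$, $D_2=9$ whenever $q\ge 23$, and, in eleven cases according to $q$, restricts $D_3$ to a short explicit list. Once $D_1,D_2,D_3$ are known, the $d_i=n/D_i$ are explicit in $a,b$, and $\sigma(n)=2n-d_1-d_2-d_3$ rearranges into an identity of the shape $3^{2a-c}=\frac{q^{2b+1}-1}{(\cdots)\,q^{2b}-3^{c'}}$, whose right-hand side is trapped strictly between two consecutive integers for \emph{all} $b$ at once; so the exponents never need to be bounded or enumerated at all. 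Congruences (mod $7$, mod $13^2$, mod $17$, etc.) enter only afterwards, in the few configurations that survive --- for example to force $b=1$ and $d_3=39$ when $q=13$, yielding $n=1521$. In short, the missing step in your plan is structural, not computational: you must first identify the deficient divisors (equivalently their cofactors $D_i$) via the sharpened metric inequality; only then do modular or size arguments have anything concrete to act on.
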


\begin{proof} It is easy to check that if $n=1521$ and $d_1$, $d_2$, $d_3$ are as above, then $\omega (n)=2$, $n$ is odd, $d_1$, $d_2$, $d_3$ are proper divisors of $n$, $\sigma (n)=2n-d_1-d_2-d_3$, and so $n$ is exactly $3$-deficient-perfect. For the other direction, assume that $n$ is odd, $\omega (n)=2$, and $n$ is exactly $3$-deficient-perfect. By Lemma \ref{lemma1}, $n$ is a square, so we can write $n=p_1^{2\alpha} p_2^{2\beta}$ where $2<p_1<p_2$ and $\alpha, \beta \geq1$. In addition, let $d_1>d_2>d_3$ be the deficient divisors of $n$, and let $D_1=n/d_1$, $D_2=n/d_2$, $D_3=n/d_3$. Then $p_1 \leq D_1<D_2<D_3 \leq n$. Since $\sigma(n)=2n-d_1-d_2-d_3$, we obtain
	\begin{align}
		2 &=  \frac{\sigma (n)}{n} + \frac{d_1}{n} +  \frac{d_2}{n} +  \frac{d_3}{n}\nonumber \\
		&=  \frac{({p_1}^{2\alpha +1}-1) ({p_2}^{2\beta +1}-1) }{(p_1-1)(p_2-1)p_1^{2\alpha} p_2^{2\beta}} + 	\frac{1}{D_1}+ \frac{1}{D_2}+ \frac{1}{D_3} \nonumber\\
		&<  \frac{p_1p_2 }{(p_1-1)(p_2-1)} + \frac{1}{D_1}+ \frac{1}{D_2}+ \frac{1}{D_3}. \label{first}
	\end{align}
	If $p_1 \geq 5$, then $p_1/(p_1-1)\leq5/4$, $p_2 \geq7$, $p_2/(p_2-1)\leq7/6$, $D_1\geq5$, $D_2\geq7$, $D_3\geq25$, and \eqref{first} implies that
	$$
	2< \frac{5}{4}\cdot  \frac{7}{6}+ \frac{1}{5}+ \frac{1}{7}+ \frac{1}{25}=1.8411\ldots,
	$$
	which is a contradiction.
	So $p_1=3$. For convenience, let $p_2=p$. Then $n=3^{2\alpha}p^{2\beta}$ and \eqref{first} becomes 
	\begin{equation} 
	2< \frac{3p }{2(p-1)} + \frac{1}{D_1}+ \frac{1}{D_2}+ \frac{1}{D_3}.\label{2leq}
	\end{equation}
	If $p\geq83$, then \eqref{2leq} leads to 
	$2<(3/2)(83/82)+1/3+1/9+1/27=1.9997\ldots$, which is impossible. So $5\leq p \leq 79$. Recall that the primes in $[5,79]$ are $5$, $7$, $11$, $13$, $17$, $19$, $23$, $29$, $31$, $37$, $41$, $43$, $47$, $53$, $59$, $61$, $67$, $71$, $73$, $79$. If $p\geq 11$ and $D_1> 3$, then $D_1\geq 9$, $D_2\geq11$, $D_3\geq 27$, and \eqref{2leq} gives $2<(3/2)(11/10)+1/9+1/11+1/27=1.8890\ldots$, which is false. Therefore
	\begin{equation}
	\text{if $p\geq 11$, then $D_1=3$.} \label{B}
	\end{equation}
	Similarly, if $p\geq 23$ and $D_2>9$, then $2<(3/2)(23/22)+1/3+1/23+1/27=1.9820\ldots$, which is not true. Thus 
	\begin{equation}
	\text{if $p\geq 23$, then $D_2=9$.} \label{C}
	\end{equation}
	Next, we divide our calculations into eleven cases according to the value of $p$. In addition, we write the possible values of $D_1$, $D_2$, $D_3$ in an increasing order.
	
	\indent\textbf{Case 1} $47\leq p\leq79$. By \eqref{B} and \eqref{C}, we have $D_1=3$, $D_2=9$, and the possible values of $D_3$ in an increasing order are $D_3 = 27, p, 81, \ldots $.
	If $D_3\geq p$, then \eqref{2leq} implies
	$2<(3/2)(47/46)+1/3+1/9+1/47=1.9983\ldots$, which is false.
	So $D_3=27$. Then $2\alpha \geq 3$, $d_1=n/D_1=3^{2\alpha-1}p^{2\beta}$, $d_2=3^{2\alpha-2}p^{2\beta}$, $d_3=3^{2\alpha-3}p^{2\beta}$, and 
	\begin{align*}
		\frac{\left( 3^{2\alpha+1}-1 \right) \left( p^{2\beta+1}-1 \right)}{ 2(p-1) } &=\sigma(3^{2\alpha}p^{2\beta})= 2\cdot3^{2\alpha}p^{2\beta}-d_1-d_2-d_3\\
		&=3^{2\alpha-3}p^{2\beta}(2\cdot3^3-3^2-3-1) 
		=41\cdot3^{2\alpha-3}p^{2\beta}.
	\end{align*}
	This leads to
	\begin{equation}
	3^{2\alpha -3}= \frac{{p}^{2\beta +1 }-1}{(82-p)p^{2\beta}-81}. \label{3927} 
	\end{equation}
	The left-hand side of \eqref{3927} is an integer, and we get a contradiction by showing that the right-hand side of \eqref{3927} is not an integer. From this point on, let $A$ be the number on the right-hand side of \eqref{3927}. If $p=47$, then $A$ is equal to 
	$$
	\frac{47\cdot 47^{2\beta}-1}{35\cdot 47^{2\beta}-81}=1+ \frac{12\cdot47^{2\beta}+80}{35\cdot 47^{2\beta}-81}
	=1+ \frac{12+(80/47^{2\beta})}{35-(81/47^{2\beta})} \in (1, 2),
	$$
	and so $A \notin \mathbb{Z}$. Similarly, 
	\begin{align*}
		&\text{if $p=53$, then $A=1+\frac{24p^{2\beta}+80}{29 p^{2\beta}-8} \in (1,2)$;} \\ 
		&\text{if $p=59$, then $A=2+\frac{ 13p^{2\beta}+161 }{ 23 p^{2\beta}-81 } \in (2,3)$;} \\
		&\text{if $p=61$, then $A=2+\frac{ 19p^{2\beta}+161 }{ 21p^{2\beta}-81 } \in (2,3)$;} \\
		&\text{if $p=67$, then $A=4+\frac{7p^{2\beta}+323} { 15p^{2\beta}-81} \in (4,5)$.}
	\end{align*}
	The remaining cases $p=71, 73, 79$ lead to $A\in(6,7)$, $A\in(8,9)$, and $A\in(26,27)$, respectively. In any case, $A\notin \mathbb{Z}$ and we have a contradiction. Hence this case does not lead to a solution.
	
	\indent\textbf{Case 2} $p \in \{37,41,43\}$. By \eqref{B} and \eqref{C}, we have $D_1=3$, $D_2=9$, and $D_3 = 27, p, 81, \ldots $. If $D_3\geq 81$, then \eqref{2leq} implies $2<(3/2)(37/36)+1/3+1/9+1/81=1.9984\ldots$, which is not possible. So $D_3=\{27,p\} $.
	
	\indent\textbf{Case 2.1} $D_1=3$, $D_2=9$, and $D_3=27$. Then $2\alpha \geq3$, \eqref{3927} holds, and the calculations in Case 1 work in this case too. Since \eqref{3927} holds, we still let $A$ be the right-hand side of \eqref{3927}. Therefore if $p=37$, then $A\in(0,1)$ and if $p\in\{41,43\}$, then $A\in(1,2)$, which is a contradiction.
	
	\indent\textbf{Case 2.2} $D_1=3$, $D_2=9$, and $D_3=p$. Then 
	\begin{align*}
	\frac{\left( 3^{2\alpha+1}-1\right) \left( p^{2\beta+1}-1\right)}{2(p-1)} &=\sigma(3^{2\alpha}p^{2\beta})=\sigma(n)=2n-d_1-d_2-d_3
	\\
	&=2\cdot 3^{2\alpha}p^{2\beta}-3^{2\alpha-1}p^{2\beta}-3^{2\alpha-2}p^{2\beta}-3^{2\alpha}p^{2\beta -1}\\
	&=3^{2\alpha-2}p^{2\beta-1} (14p-9),
	\end{align*}
	which implies
	\begin{equation}
		3^{2\alpha-2}= \frac{p^{2\beta+1}-1}{(46p-p^2-18)p^{2\beta-1}-27} \label{case22}.
	\end{equation}
	The equality \eqref{case22} can be used in the same way as \eqref{3927}. So let $B$ be the number on the right-hand side of \eqref{case22}. Similar to the previous computation, we see that if $p=37$, then $B\in(4,5)$ and if $p=43$, then $B\in (16,17)$ which contradicts the fact that $B=3^{2\alpha-2} \in \mathbb{Z}$. Suppose $p=41$. Then $B\in (8,10)$, which implies $B=9$. Equating the right-hand side of \eqref{case22} with $B=9$, substituting $p=41$, and performing a straightforward manipulation leads to $41^{2\beta-1}=121$, which is not possible. Hence there is no solution in this case.
	
	\begin{remark}
		Before going further, we note that the calculations similar to \eqref{3927} and \eqref{case22} and their applications occur throughout the proof, and we give less details than those in \eqref{3927} and \eqref{case22}.
	\end{remark} 
	
	\indent\textbf{Case 3} $p\in \{29,31\}$. Then by \eqref{B} and \eqref{C}, $D_1=3$, $D_2=9$, and $D_3 =27$, $p$, $81$, $3p$, $243$, $9p$, $729$, $\ldots $. If $p=31$ and $D_3\geq 243$, then \eqref{2leq} implies $2<(3/2)(31/30)+1/3+1/9+1/243=1.9985\ldots$, which is false. Similarly, assuming $p=29$ and $D_3\geq 729$ leads to a false inequality. Therefore 
	\begin{align}
		&\text{if $p=31$, then $D_3\in \{27,31,81,93\}$}, \label{D} \\ 
		&\text{if $p=29$, then $D_3\in \{27,29,81,87,243,261\}$.} \label{E}
	\end{align}
	Next, we divide our calculations according to the value of $D_3$.
	
	\indent\textbf{Case 3.1} $D_3=27$. Then \eqref{3927} holds and the same method still works. We obtain
	\begin{align*}
		\text{if $p=29$, then $A=\left( 29p^{2\beta} -1\right)/\left( 53p^{2\beta}-81\right) \in (0,1)$};\\
		\text{if $p=31$, then $A=\left( 31p^{2\beta} -1\right)/\left( 51p^{2\beta}-81\right) \in (0,1)$}. 
	\end{align*}
	So $A\notin \mathbb{Z}$ and we get a contradiction.
	
	\indent\textbf{Case 3.2} $D_3=p\in\{29,31\}$. Then \eqref{case22} holds and 
	\begin{align*}
		\text{if $p=29$, then  $B=\left( 841p^{2\beta-1} -1\right)/\left( 475p^{2\beta-1}-27\right) \in (1,2)$};\\
		\text{if $p=31$, then $B=\left( 961p^{2\beta-1} -1\right)/\left( 447p^{2\beta-1}-27\right) \in (1,2)$},
	\end{align*}
	which is a contradiction.
	
	\indent\textbf{Case 3.3} $D_3=81$. Similar to the calculations for \eqref{3927} and \eqref{case22}, we write $\sigma (n)=2n-d_1-d_2-d_3$ where $d_1$, $d_2$ are the same as before, but $d_3=n/D_3=3^{2\alpha-4}p^{2\beta}$ and $2\alpha \geq 4$. After a similar algebraic manipulation, we get
	\begin{equation}
		3^{2\alpha -4}= \frac{p^{2\beta+1}-1}{(250-7p)p^{2\beta}-243}. \label{case33}
	\end{equation}
	When $p=29$ or $31$, the right-hand side of \eqref{case33} is in the interval $(0,1)$, which is impossible.
	
	\indent\textbf{Case 3.4} $D_3=93$. By \eqref{D} and \eqref{E}, we know that $p=31$. Similar to Case 3.3 but with $d_3=n/D_3=3^{2\alpha-1}p^{2\beta-1}$, we start with $\sigma(n)=2n-d_1-d_2-d_3$ and perform an algebraic manipulation to obtain $$3^{2\alpha-2}= \frac{p^{2\beta+1}-1}{(34p-p^2-6)p^{2\beta-1}-27}= \frac{961p^{2\beta-1}-1}{87p^{2\beta-1}-27} \in (11,12),$$ which is false.
	
	\indent\textbf{Case 3.5} $D_3\in \{87,243,261\}$. By \eqref{D} and \eqref{E}, we have $p=29$. Similar to Case 3.3 but with different values of $d_3=n/D_3=3^{2\alpha-1}p^{2\beta-1}$, $3^{2\alpha-5}p^{2\beta}$, or $ 3^{2\alpha-2}p^{2\beta-1}$ when $D_3=87, 243,$ or $261$, respectively. These lead to 
	\begin{align*}
		&\text{$2\alpha \geq 2$ and $3^{2\alpha-2}= \frac{p^{2\beta+1}-1}{(34p-p^2-6)p^{2\beta-1}-27}= \frac{841p^{2\beta-1}-1}{139p^{2\beta-1}-27} \in (6,7)$,} \text{ if $D_3=87$;}
		\\ &\text{$2\alpha \geq 5$  and $3^{2\alpha-5}= \frac{p^{2\beta+1}-1}{(754-25p)p^{2\beta}-729}= \frac{29^{2\beta+1}-1}{29^{2\beta+1}-729} \in (1,2)$,} \text{ if $D_3=243$;}
		\\ &\text{$2\alpha \geq 2$ and
		$3^{2\alpha-2}= \frac{p^{2\beta+1}-1}{(30p-p^2-2)p^{2\beta-1}-27}= \frac{841p^{2\beta-1}-1}{27p^{2\beta-1}-27} \in (31,33)$,} \text{ if $D_3=261$}.
	\end{align*}
	In any case, we get a contradiction.
	
	\indent\textbf{Case 4} $p=23$. By \eqref{B} and \eqref{C}, we have $D_1=3$ and $D_2=9$. We start from 
	\begin{align*}
		(3^{2\alpha+1}-1)(p^{2\beta+1}-1)&=2(p-1)\sigma(n)=2(p-1)(2n-d_1-d_2-d_3)
		\\&=28(p-1)3^{2\alpha-2}p^{2\beta}-2(p-1)d_3.
	\end{align*}
	Writing $(3^{2\alpha+1}-1)(p^{2\beta+1}-1)=27p3^{2\alpha-2}p^{2\beta}-3^{2\alpha+1}-p^{2\beta+1}+1$, the above leads to
	\begin{equation}
		(28-p)3^{2\alpha-2}p^{2\beta}-3^{2\alpha+1}-p^{2\beta+1}+1+2(p-1)d_3=0. \label{case5}
	\end{equation}
	Multiplying both sides of \eqref{case5} by $28-p$ and factoring a part of it gives us 
	\begin{equation}
		\left((28-p)3^{2\alpha-2}-p\right)\left((28-p)p^{2\beta}-27\right)=28(p-1)-2(28-p)(p-1)d_3. \label{F}
	\end{equation}
	Substituting $p=23$, the equation \eqref{F} becomes 
	\begin{equation}
		(5\cdot 3^{2\alpha-2}-23)(5\cdot 23^{2\beta}-27)=616-220d_3. \label{case41}
	\end{equation}
	Let $A_1$ and $A_2$ be the expressions on the left and the right of \eqref{case41}, respectively. If $\alpha \geq2$, then $A_1>616$ while $A_2<616$, which is not the case. So $\alpha=1$ and $A_1=-18(5\cdot23^{2\beta}-27)$. Since $3 \mid A_1$ and $3\nmid 616$, we see that $3\nmid d_3$. Since $d_3 \mid n$ and $n=3^{2\alpha}23^{2\beta}$, we obtain $d_3=23^{b_3}$ for some $b_3 \geq0$. If $b_3=0$, then $A_2=616-220 \equiv 5 \Mod{23}$; if $b_3 \geq 1$, then $A_2\equiv 18 \Mod{23}$. But $A_1\equiv3 \Mod{23}$, and so $A_1=A_2$ and $A_1 \not\equiv A_2 \Mod{3}$, which is not possible.

	\indent\textbf{Case 5} $p=19$. By \eqref{B}, $D_1=3$. So $\{ D_2,D_3\} \subseteq \{9, 19, 27, 57, \ldots\}$. If $D_2\geq 19$ and $D_3 \geq 57$, then \eqref{2leq} implies that $2<(3/2)(19/18)+1/3+1/19+1/57=1.9868\ldots$, which is not true. Therefore ($D_2=9$) or ($D_2=19$ and $D_3=27$).
	
	\indent\textbf{Case 5.1} $D_2=9$. Then the computation in Case 4 still works and  \eqref{F} holds. Substituting $p=19$ in \eqref{F} and dividing both sides by $9$, we obtain
	\begin{equation}
		(3^{2\alpha}-19)(19^{2\beta}-3)=56-36d_3. \label{case51}
	\end{equation}
	Let $A_3$, $A_4$ be the expressions on the left and the right of \eqref{case51}, respectively. If $\alpha \geq 2$, then $A_3>56$ while $A_4<56$, which is not true. Therefore $\alpha=1$. Then $11\equiv A_3\equiv A_4 \equiv -1+2d_3 \Mod{19}$, and so $19\nmid d_3$. Since $d_3 \mid n$ and $n=3^{2\alpha}p^{2\beta}=3^2\cdot 19^{2\beta}$, we see that $d_3=1,3,9$. Substituting $d_3=1,3,9$ in \eqref{case51} leads to $5\cdot19^{2\beta}=5, 41, 149$, respectively, which has no solution.
	
	\indent\textbf{Case 5.2} $D_2=19$ and $D_3=27$. Similar to the calculations for \eqref{3927} and \eqref{case41} but with different values of $d_2$ and $d_3$, we obtain after an algebraic manipulation that
	$$3^{2\alpha-3}= \frac{361\cdot 19^{2\beta -1}-1}{117\cdot 19^{2\beta-1}-81} \in (3,4), $$
	which is not possible.
	
	\indent\textbf{Case 6} $p\in \{11, 13, 17\}$. Then by \eqref{B}, we have $D_1=3$. The possible values of $D_2$ and $D_3$ listed in an increasing order are $9$, $p$, $27$, $3p$, $81$, $9p$, $\min \{ p^2,243\}$, $\max \{ p^2,243\}$, $\ldots $. We can eliminate some cases by using \eqref{2leq} as before. If $p=17$ and $D_2\geq 27$, then \eqref{2leq} implies $2<(3/2)(17/16)+1/3+1/27+1/51<2$; if $p=17,D_2\geq17$, and $D_3\geq 81$, then \eqref{2leq} leads to $2<(3/2)(17/16)+1/3+1/17+1/81<2$. Similarly, if $p=13$, then we must have $D_2<39$; if $p=13$ and $D_2\geq 27$, then it forces $D_3<243$; if $p=11$, then $D_2<81$ or $D_3<243$. Therefore, we obtain 
	\begin{align}
		&\text{if $p=17$, then $(D_2=9)$ or ($D_2=17$ and  $D_3\in\{ 27,51\}$)}; \label{H1} \\
		& \text{if $p=13$, then $(D_2\in \{9,13\})$ or ($D_2=27$ and $D_3\in \{ 39,81,117,169\}$)}; \label{H2} \\
		&\text{if $p=11$, then $(D_2\in \{9,11,27,33\})$ or $(D_2=81$ and  $D_3 \in \{99,121\})$ or} \nonumber
		\\
		& \quad\quad\quad\quad\quad\quad\quad\text{$(D_2=99$ and $D_3=121$)}.  \label{H3} 
	\end{align}
	We divide our calculations according to the values of $D_2$ and $D_3$ listed in \eqref{H1}, \eqref{H2}, and \eqref{H3}.
	
	\indent\textbf{Case 6.1} $D_2=9$ (so $p$ can be any of $11$, $13$, or $17$). Since $D_1=3$ and $D_2=9$, the equation \eqref{F} holds. Substituting $p=11, 13, 17$ in \eqref{F}, we obtain, respectively
	\begin{align}
		&(17\cdot 3^{2\alpha -2}-11)(17\cdot 11^{2\beta}-27)=280-340d_3 \text{ (if $p=11$)}, \label{case611} \\
		&(15\cdot 3^{2\alpha-2}-13)(15\cdot 13^{2\beta}-27)=336-360d_3 \text{ (if $p=13$)}, \label{case612} \\
		&(11\cdot 3^{2\alpha -2}-17)(11\cdot 17^{2\beta}-27)=448-352d_3 \text{ (if $p=17$)}, \label{case613}
	\end{align}
	where $d_3$ in \eqref{case611} is a proper divisor of $3^{2\alpha}11^{2\beta}$, $d_3$ in \eqref{case612} is a proper divisor of $3^{2\alpha}13^{2\beta}$, and $d_3$ in \eqref{case613} is a proper divisor of $3^{2\alpha}17^{2\beta}$. Since $\alpha, \beta \geq 1$, the left-hand side of \eqref{case611} and \eqref{case612} are positive, while the right-hand side of \eqref{case611} and \eqref{case612} are negative. So \eqref{case611} and \eqref{case612} do not lead to a solution. For \eqref{case613}, we have $448-352d_3\leq 96$, which implies $\alpha=1$. Then \eqref{case613} reduces to $3\cdot 17^{2\beta}+13-16d_3=0$. Reducing this mod $3$ and mod $17$, we see that $d_3 \equiv 1 \Mod{3}$ and $d_3 \equiv 4 \Mod{17}$. Since $d_3 \mid 3^{2\alpha}17^{2\beta}$, $3\nmid d_3$, and $17\nmid d_3$, we obtain $d_3=1$, which contradicts the fact that $d_3\equiv 4 \Mod{17}$. Thus there is no solution in this case.
	
	\indent\textbf{Case 6.2} $D_2=p$ where $p\in \{11,13\}$. Similar to the calculation  for \eqref{F}, we have 
	\begin{align*}
		(3^{2\alpha+1}-1)(p^{2\beta+1}-1)&=2(p-1)\sigma (n)=2(p-1)(2n-d_1-d_2-d_3)
		\\&=2(p-1)(2\cdot 3^{2\alpha}p^{2\beta}-3^{2\alpha-1}p^{2\beta}-3^{2\alpha}p^{2\beta-1}-d_3). 
	\end{align*}
	Let $B_p=16p-p^2-6$. Following a straightforward algebraic manipulation and multiplying both sides by $B_p$, the above leads to
	\begin{equation}
		(B_p3^{2\alpha-1}-p^2)(B_p p^{2\beta-1}-9)=9p^2-B_p-2B_p(p-1)d_3. \label{H4}
	\end{equation}
	Substituting $p=11$ in \eqref{H4}, we obtain
	\begin{equation}
		(49\cdot3^{2\alpha-1}-121)(49\cdot 11^{2\beta-1}-9)=1040-980d_3. \label{case622}
	\end{equation}
	Since $\alpha, \beta \geq 1$, the left-hand side of \eqref{case622} is larger then $60$, while the right-hand side of \eqref{case622} is at most $60$, so \eqref{case622} does not give a solution. Next, substituting $p=13$ in \eqref{H4} and dividing both sides by $3$, we obtain
	\begin{equation}
		(33\cdot 3^{2\alpha-1}-169)(11\cdot13^{2\beta-1}-3)=496-264d_3. \label{case623}
	\end{equation}
	Since the right-hand side of \eqref{case623} is at most $232$, we obtain $\alpha=1$ and \eqref{case623} reduces to
	\begin{equation}
		35\cdot 13^{2\beta-1}-12d_3+13=0. \label{H5}
	\end{equation}
	Recall that $d_3 \mid n$ and $n=3^{2\alpha}p^{2\beta}=3^2\cdot 13^{2\beta}$. So $d_3=3^{a_3}13^{b_3}$ for some $a_3\in \{0,1,2\}$ and $b_3 \geq 0$. Reducing \eqref{H5} modulo $7$, we see that $2d_3\equiv 1 \Mod{7}$. If $a_3=0$, then $2d_3=2\cdot 13^{b_3}\equiv 2(-1)^{b_3}\equiv 2,-2 \not\equiv 1 \Mod{7}$. If $a_3=2$, then $2d_3=18\cdot13^{b_3}\equiv4(-1)^{b_3} \equiv4,-4 \not \equiv 1 \Mod{7}$. Therefore $a_3=1$ and \eqref{H5} becomes
	\begin{equation}
		35\cdot 13^{2\beta-1}-36\cdot 13^{b_3}+13=0. \label{case624}
	\end{equation}
	Suppose for a contradiction that $\beta \geq2$. Reducing \eqref{case624} modulo $13^2$, we obtain $36\cdot13^{b_3}\equiv 13 \Mod{13^2}$. If $b_3\geq 2$, then $36\cdot 13^{b_3}\equiv 0 \not\equiv 13\Mod{13^2}$. If $b_3=1$, then $36\cdot 13^{b_3}-13=35\cdot 13 \not\equiv 0 \Mod{13^2}$. If $b_3=0$, then $36\cdot13^{b_3}=36\not\equiv 13 \Mod{13^2}$. In any case, we reach a contradiction. Therefore $\beta=1$. Substituting $\beta=1$ in \eqref{case624}, we obtain $b_3=1$, and so $d_3=3^{a_3}13^{b_3}=39$. This leads to $n=3^{2\alpha}p^{2\beta}=3^2\cdot 13^2$ with the deficient divisors $d_1=n/D_1=3\cdot13^2=507$, $d_2=n/D_2=3^2\cdot13=117$ and $d_3=39$, which we already verified at the beginning of the proof that this is indeed a solution to our problem. The elimination for the other cases can be done in a similar way to the previous cases, so we give less details. Recall that $D_1=3$. The other cases are as follows:
	\begin{itemize}
\item[(i)] $p=17,D_2=17$, and $D_3\in \{27,51\}$ (this is the remaining case from \eqref{H1}).
\item[(ii)] $p=13,D_2=27$, and $D_3\in \{39,81,117,169\}$ (this is the remaining case from \eqref{H2}).
\item[(iii)] $p=11,D_2\in \{27,33\}$.
\item[(iv)] $p=11,D_2=81$, and $D_3\in \{99,121\}$.
\item[(v)] $p=11,D_2=99$, and $D_3=121$.
\end{itemize}	
	In (i),(ii),(iv), and (v), we know the values of $D_1$, $D_2$, $D_3$, and so we have the values of $d_1$, $d_2$, $d_3$. We start from the equality $\sigma(n)=2n-d_1-d_2-d_3$, perform the usual algebraic manipulation, and try to write the minimum nonnegative power of $3$ appearing among $d_1$, $d_2$, $d_3$ in terms of the other variables. We obtain the following results.
	For (i), we have $p=17$, $D_1=3$, $D_2=17$, and
	\begin{align*}
		&\text{if $D_3=27$, then $2\alpha \geq3$ and $3^{2\alpha-3}= \frac{289\cdot17^{2\beta-1}-1}{337\cdot17^{2\beta-1}-81} \in (0,1)$;}\\
		&\text{if $D_3=51$, then $3^{2\alpha-1}= \frac{289\cdot 17^{2\beta-1}-1}{9\cdot17^{2\beta-1}-9}\in (32,35)$,}
	\end{align*}
	which is a contradiction.
	For (ii), we have $p=13$, $D_1=3$, $D_2=27$, $2\alpha \geq3$, and
	\begin{align*}
		&\text{if $D_3=39$, then $3^{2\alpha-3}= \frac{169\cdot13^{2\beta-1}-1}{177\cdot13^{2\beta-1}-81} \in (0,1)$;}\\
		&\text{if $D_3=81$, then $2\alpha \geq 4$ and $3^{2\alpha-4}= \frac{13\cdot13^{2\beta}-1}{15\cdot 13^{2\beta}-243} \in (0,1)$;}\\
		&\text{if $D_3=117$, then $3^{2\alpha-3}= \frac{169\cdot 13^{2\beta-1}-1}{33\cdot13^{2\beta-1}-81} \in (5,7)$;}\\
		&\text{if $D_3=169$, then $3^{2\alpha-3}= \frac{2197\cdot13^{2\beta-2}-1}{141\cdot13^{3\beta-2}-81}\in (15,37)$.}
	\end{align*}
	The first three cases above give a contradiction. The last case implies that $$2197\cdot 13^{2\beta-2}-1=27(141\cdot13^{2\beta-2}-81),$$ which leads to $1610\cdot13^{2\beta-2}=2186$, which is impossible.
	For (iv), we have $p=11$, $D_1=3$, $D_2=81$, $2\alpha \geq4$, and 
	\begin{align*}
		&\text{if $D_3=99$, then $3^{2\alpha-4}= \frac{121\cdot11^{2\beta-1}-1}{103\cdot11^{2\beta-1}-243} \in (1,2)$;}\\
		&\text{if $D_3=121$, then $3^{2\alpha-4}= \frac{1331\cdot 11^{2\beta-2}-1}{773\cdot 11^{2\beta-2}-243} \in (1,3)$,}
	\end{align*}
	which is false.
	For (v), we have $p=11$, $D_1=3$, $D_2=99$, $D_3=121$, which leads to 
	$$3^{2\alpha-2}= \frac{1331\cdot 11^{2\beta-2}-1}{37\cdot11^{2\beta-2}-27} \in (35,37)\cup\{133\},$$ 
	which is not possible. We now consider (iii). We have $p=11$, $D_1=3$, $D_2\in\{27,33\}$. We know the values of $d_1$, $d_2$ but not $d_3$. We start with $\sigma(n)=2n-d_1-d_2-d_3$ and write $d_3$ in terms of the product of the other variables. Similar to the calculation for \eqref{F}, we obtain 
	\begin{align}
		&\text{if $D_2=27$, then $2\alpha\geq 3$ and $(3^{2\alpha-3}-1)(11^{2\beta+1}-81)=80-20d_3$}; \label{iii1}\\
		&\text{if $D_2=33$, then $(3^{2\alpha+1}-121)(11^{2\beta-1}-1)=120-20d_3$}. \label{iii2}
	\end{align}
	In \eqref{iii1}, $2\alpha$ is an even integer $\geq3$, so $2\alpha \geq4$, and thus the left-hand side of \eqref{iii1} is larger than $80$, while the right-hand side of \eqref{iii1} is less than $80$, which is a contradiction. Since the right-hand side of \eqref{iii2} is less than $120$, we see that $\alpha=1$ and \eqref{iii2} reduces to $47\cdot 11^{2\beta+1}-10d_3+13=0$. Reducing this modulo $11$, we see that $10d_3\equiv 2 \Mod{11}$, and therefore $d_3\equiv 9 \Mod{11}$. So $11\nmid d_3$. Since $d_3 \mid n$ and $n=3^{2\alpha}p^{2\beta}=3^2\cdot 11^{2\beta}$, we have $d_3=1,3,9$. Since $d_3\equiv 9 \Mod{11}$, $d_3=9$ only. Then $47\cdot 11^{2\beta+1}-90+13=0$. This leads to $47\cdot 11^{2\beta+1}=77$, which has no solution.
	
	\indent\textbf{Case 7} $p=7$. Then $\{D_1,D_2,D_3\} \subseteq \{3,7,9,21,\ldots\}$. If $D_1 \geq 7$ and $D_2\geq 21$, then \eqref{2leq} implies $2<(3/2)(7/6)+1/7+1/21+1/21 <2$ which is impossible. So ($D_1=3$) or ($D_1=7$ and $D_2=9$). If $D_1=3$, then $d_1=3^{2\alpha-1}7^{2\beta}$ and we have
	\begin{align*}
		0&=12\left(\sigma (n)-2n+d_1+d_2+d_3 \right) \\
		&= (3^{2\alpha+1}-1)(7^{2\beta+1}-1)-24n+12(d_1+d_2+d_3)\\
		&=3^{2\alpha}7^{2\beta}\left(21-3/7^{2\beta}-7/3^{2\alpha}-24\right)+1+12(d_1+d_2+d_3)\\
		&= 1+12d_1\left( 1+d_2/d_1+d_3/d_1\right)-3^{2\alpha}7^{2\beta}(3+3/7^{2\beta}+7/3^{2\alpha})\\
		&> 1+12d_1 -3^{2\alpha}7^{2\beta}(3+3/7^2+7/3^2)\\
		&> 12d_1-3^{2\alpha}7^{2\beta}(4)=0,
	\end{align*}
	which is a contradiction. So $D_1=7$ and $D_2=9$. We start with $\sigma (n)=2n-d_1-d_2-d_3$, substitute $d_1=3^{2\alpha}7^{2\beta-1}$, $d_2=3^{2\alpha-2}7^{2\beta}$, and do the usual algebraic manipulation to obtain
	\begin{equation}
		(3^{2\alpha-1}-49)(7^{2\beta-1}-9)=440-12d_3. \label{case7}
	\end{equation}
	If $\alpha \geq 3$ and $\beta \geq2$, then the left-hand side of \eqref{case7} is larger than $440$, while the right-hand side of \eqref{case7} is smaller than $440$. Therefore ($\alpha \in \{1,2\}$) or ($\alpha \geq3$ and $\beta =1$). Since $d_3 \mid n$ and $n=3^{2\alpha}7^{2\beta}$, $d_3=3^{a_3}7^{b_3}$ for some $a_3, b_3 \geq0.$ 
	
	\indent\textbf{Case 7.1} $\alpha \geq3$ and $\beta =1$. Then \eqref{case7} reduces to 
	\begin{equation}
		3^{2\alpha-1}+171=6\cdot3^{a_3}7^{b_3}. \label{K1}
	\end{equation}
	Since $3^{2\alpha-1}+171=3^2(3^{2\alpha-3}+19)$, we obtain $3^2 \mid\mid 6d_3$, which implies $a_3=1$. Dividing both sides of \eqref{K1} by $9$, we obtain $3^{2\alpha-3}+19=2\cdot7^{b_3}$. Reducing this modulo $3$, we have a contradiction.
	
	\textbf{Case 7.2} $\alpha \in \{1,2\}$. If $\alpha=2$, then \eqref{case7} leads to $d_3\equiv0 \Mod{11}$ which contradicts the fact that $d_3=3^{a_3}7^{b_3}$. So $\alpha=1$. Then $a_3\in \{0,1,2\}$ and \eqref{case7} reduces to $23\cdot 7^{2\beta-1}-6d_3+13=0$. From this, we see that $7\nmid d_3$. So $b_3=0,d_3=3^{a_3}$, and the above equation becomes $23\cdot 7^{2\beta-1}-6\cdot 3^{a_3}+13=0$. Substituting $a_3=0,1,2$, we obtain $23\cdot 7^{2\beta-1}=-7,5,41$, which is not possible. Hence there is no solution in this case.
	
	\indent\textbf{Case 8} $p=5$. Then the possible values of $D_1$, $D_2$, $D_3$ listed in an increasing order are $3, 5, 9, 15, 25, \ldots$. If $D_1\geq 25$, then \eqref{2leq} implies $2<(3/2)(5/4)+1/25+1/25+1/25<2$, which is false. Therefore $D_1\in \{3, 5, 9, 15\}$. It is possible to obtain bounds for $D_2$ and $D_3$ as in the other cases but the same method will lead to a longer calculation. In this case, it is better to get a bound only for $D_1$ and go back to $d_1$, $d_2$, $d_3$. Let $d_1=3^{a_1}5^{b_1}$, $d_2=3^{a_2}5^{b_2}$, and $d_3=3^{a_3}5^{b_3}$ where $a_i, b_i \geq 0$, and recall that $n>d_1>d_2>d_3\geq1$ and $d_1$, $d_2$, $d_3$ are the deficient divisors of $n=3^{2\alpha}5^{2\beta}$. In addition, from $\sigma (n)=2n-(d_1+d_2+d_3)$, we get
	\begin{align}
		(3^{2\alpha+1}-1)(5^{2\beta+1}-1)&=16\cdot 3^{2\alpha}5^{2\beta}-8(d_1+d_2+d_3)\notag\\
		&=16\cdot 3^{2\alpha}5^{2\beta}-8(3^{a_1}5^{b_1}+3^{a_2}5^{b_2}+3^{a_3}5^{b_3}). \label{A1}
	\end{align}
	From \eqref{A1}, we see that $8(d_1+d_2+d_3)=3^{2\alpha}5^{2\beta}+3^{2\alpha+1}+5^{2\beta+1}-1$, which implies
	\begin{equation}
		1< \frac{8}{3^{2\alpha}5^{2\beta}}(d_1+d_2+d_3)<1+ \frac{3}{5^2}+ \frac{5}{3^2}<2. \label{A2}
	\end{equation}
	Since $D_1 \in \{3,5,9,15\}$ and $d_1=n/D_1$, we see that
	\begin{equation}
		(a_1,b_1)=(2\alpha-1,2\beta), (2\alpha,2\beta-1),(2\alpha-2,2\beta), \ \text{or}\ (2\alpha-1,2\beta-1). \label{A3}
	\end{equation}
	Observe that $3^4\equiv 1 \Mod{5}, 5^2 \equiv 1 \Mod{3}$, and the exponents $4$ and $2$ are the smallest positive integers satisfying each congruence. From this, it is not difficult to verify that the left-hand side of \eqref{A1} satisfies
	\begin{align}
		&(3^{2\alpha+1}-1)(5^{2\beta+1}-1) \equiv 
		\begin{cases}
			3 \Mod{5}, &\text{if $\alpha$ is even; }\\
			4 \Mod{5}, &\text{if $\alpha$ is odd, }\label{LHSmod5}
		\end{cases}\\
		&(3^{2\alpha+1}-1)(5^{2\beta+1}-1) \equiv 2 \Mod{3}. \label{LHSmod3}
	\end{align}
	Since $5$ does not divide the left-hande side of \eqref{A1}, at least one of $d_1$, $d_2$, $d_3$ is not divisible by $5$, that is, at least one of $b_1$, $b_2$, $b_3$ is zero. By \eqref{A3}, we see that $b_1\neq 0$. Thus
	\begin{equation}
		b_1\neq0 \text{ and }\min \{b_2,b_3\}=0. \label{A4}
	\end{equation}
	Suppose for a contradiction that $a_1=a_2=a_3=0$. That is, $d_1=5^{b_1}$, $d_2=5^{b_2}$, $d_3=5^{b_3}$. Since $d_1>d_2>d_3$, we have $b_1>b_2>b_3$. So by \eqref{A4}, $b_3=0$ and $b_1>b_2>0$. Then the right-hand side of (\eqref{A1} is $\equiv2 \Mod{5}$ contradicting \eqref{LHSmod5}. So one of $a_1$, $a_2$, $a_3$ is not zero. By \eqref{LHSmod3} and \eqref{A1}, one of $d_1$, $d_2$, $d_3$ is not divisible by $3$, and so one of $a_1,a_2,a_3$ is zero. We conclude that 
	\begin{equation}
		\max \{a_1,a_2,a_3\} \geq 1 \text{ and } \min \{a_1,a_2,a_3\}=0. \label{A5}
	\end{equation}
	The right-hand side of \eqref{A1} is congruent to
	\begin{align}
		\begin{cases}
			(0+0+5^{b_3}) \Mod{3},& \text{if $a_1 \neq 0, a_2\neq 0$, and $a_3=0$};\\
			(0+5^{b_2}+0) \Mod{3},& \text{if $a_1 \neq 0, a_2= 0$, and $a_3\neq0$};\\
			(5^{b_1}+0+0) \Mod{3},& \text{if $a_1 = 0, a_2\neq 0$, and $a_3\neq 0$};\\
			(5^{b_1}+5^{b_2}+0) \Mod{3},& \text{if $a_1 = a_2=0$, and $a_3\neq 0$};\\
			(5^{b_1}+0+5^{b_3}) \Mod{3},& \text{if $a_1= a_3= 0$, and $a_2\neq 0$};\\
			(0+5^{b_2}+5^{b_3}) \Mod{3},& \text{if $a_2=a_3=0$, and $a_1 \neq 0$}.
		\end{cases} \label{A6}
	\end{align}
	By comparing \eqref{A1}, \eqref{LHSmod3} and \eqref{A6}, we obtain the parities of $b_1$, $b_2$, $b_3$ as follows. If $5^b \equiv 2 \Mod{3}$, then $b$ is odd. If $5^x+5^y \equiv 2\Mod{3}$, then $x$ and $y$ are even. For convenience, for each $i\in \{1,2,3\}$, if $b_i$ is odd, we write $b_i^\prime$ for $b_i$; if $b_i$ is even, then we replace $b_i$ by $b_i^{\prime\prime}$. Therefore, for each $i\in \{1,2,3\}$, $b_i^\prime$, $b_i^{\prime\prime}\geq 0$, $b_i^\prime =b_i$ is odd, and $b_i^{\prime\prime}=b_i$ is even, and there are six cases to consider as follows:
	
	\indent\textbf{Case 8.1} $d_1=3^{a_1}5^{b_1}$, $d_2=3^{a_2}5^{b_2}$, $d_3=5^{b_3^\prime}$, $a_1 \neq 0$, $ a_2\neq 0$, and $a_3=0$,
	
	\indent\textbf{Case 8.2} $d_1=3^{a_1}5^{b_1}$, $d_2=5^{b_2^\prime}$, $d_3=3^{a_3}5^{b_3}$, $a_1 \neq 0$, $a_2= 0$, and $a_3\neq0$,
	
	\indent\textbf{Case 8.3} $d_1=5^{b_1^\prime}$, $d_2=3^{a_2}5^{b_2}$, $d_3=3^{a_3}5^{b_3}$, $a_1= 0$, $a_2 \neq 0$, and $a_3\neq0$,
	
	\indent\textbf{Case 8.4} $d_1=5^{b_1^{\prime\prime}}$, $d_2=5^{b_2^{\prime\prime}}$, $d_3=3^{a_3}5^{b_3}$, $a_1= a_2 = 0$, and $a_3\neq0$,
	
	\indent\textbf{Case 8.5} $d_1=5^{b_1^{\prime\prime}}$, $d_2=3^{a_2}5^{b_2}$, $d_3=5^{b_3^{\prime\prime}}$, $a_1=a_3 = 0$, and $a_2\neq0$,
	
	\indent\textbf{Case 8.6} $d_1=3^{a_1}5^{b_1}$, $d_2=5^{b_2^{\prime\prime}}$, $d_3=5^{b_3^{\prime\prime}}$, $a_2=a_3 = 0$, and $a_1\neq0$.
	
	Some cases are shorter but we will begin with Case 8.1.
	
	\indent\textbf{Case 8.1} Since  $b_3^\prime \neq0$, we obtain by \eqref{A4} that $b_1 \neq0$ and $b_2=0$. By \eqref{A3}, there are 4 cases to consider. If $a_1=2\alpha-1$ and $b_1=2\beta$, then 
	$$
	8(d_1+d_2+d_3)/(3^{2\alpha}5^{2\beta}) = {8}\left( 3^{2\alpha-1}5^{2\beta}+3^{a_2}+5^{b_3^\prime} \right)/\left(3^{2\alpha}5^{2\beta}\right) > 8/3>2,
	$$
	which contradicts \eqref{A2}. Next, suppose that $a_1=2\alpha$ and $b_1=2\beta-1$. Since $3^{a_2}=d_2>d_3=5^{b_3^\prime} \geq5$, we obtain $a_2\geq2$. Thus
	\begin{align*}
		0&=8\left(\sigma (n)-2n+d_1+d_2+d_3\right)=8(d_1+d_2+d_3)-3^{2\alpha}5^{2\beta}-3^{2\alpha+1}-5^{2\beta+1}+1\\
		&>8(3^{2\alpha}5^{2\beta-1}+3^2+5)-3^{2\alpha}5^{2\beta}-3^{2\alpha+1}-5^{2\beta+1}\\
		&=(3^{2\alpha+1}-25)(5^{2\beta-1}-1)+87>0,
	\end{align*}
	which is false. Next, consider the case $(a_1,b_1)=(2\alpha-2,2\beta)$. Since $a_1\neq0$, $\alpha \geq2$. If $\beta \geq 2$, then \eqref{A2} implies that
	\begin{align*}
		1&< \frac{8}{3^{2\alpha}5^{2\beta}}(3^{2\alpha-2}5^{2\beta} +3^{a_2}+5^{b_3^\prime} ) \leq  \frac{8}{3^{2\alpha}5^{2\beta}} \left(3^{2\alpha-2}5^{2\beta}+3^{2\alpha}+5^{2\beta-1} \right) \\
		&= 8\left( \frac{1}{3^2} + \frac{1}{5^{2\beta}} +  \frac{1}{3^{2\alpha}\cdot 5}\right) \leq 8\left( \frac{1}{3^2}+ \frac{1}{5^4}+ \frac{1}{3^4 \cdot5} \right) <1,
	\end{align*} 
	which is a contradiction. So $\beta=1$. Then $d_3=5$.
	
	Starting with $0=8\left(\sigma(n)-2n+d_1+d_2+d_3\right)$, and then simplifying leads to $2\cdot3^{a_2}=13\cdot3^{2\alpha-2}+21$. Since $13\cdot3^{2\alpha-2}+21 > 2\cdot 3^{2\alpha-1}$, we obtain $a_2=2\alpha$. But then $21=2\cdot 3^{a_2}-13\cdot3^{2\alpha-2}=5\cdot 3^{2\alpha-2} \equiv0 \Mod{5}$, a contradiction. Next, we consider the last case: $(a_1,b_1)=(2\alpha-1,2\beta-1)$. If $\alpha \geq2$ or $\beta \geq2$, then \eqref{A2} implies 
	\begin{align*}
	1&< \frac{8}{3^{2\alpha}5^{2\beta}}\left(3^{2\alpha-1}5^{2\beta-1}+3^{a_2}+5^{b_3^\prime} \right) \\
	&\leq 8\left(  \frac{1}{15}+\max \left\{ \frac{1}{25}+ \frac{1}{3^4\cdot 5},  \frac{1}{5^4}+ \frac{1}{3^2\cdot5} \right\} \right) <1,
	\end{align*}
	which is impossible. So $\alpha=1=\beta$. Then $a_1=1=b_1$. Since $15=d_1>3^{a_2}=d_2>d_3=5^{b_3^\prime}=5$, we have $d_2=9$. Now it is easy to verify that $\sigma(n)-2n+d_1+d_2+d_3=-18\neq0$. So there is no solution in this case.
	
	\indent\textbf{Case 8.2} Since $b_2=b_2^\prime \neq 0 $, we obtain by \eqref{A4} that $b_3=0$. Similar to Case 8.1, we divide our calculation into four cases according to the values of $a_1$ and $b_1$ as given in \eqref{A3}. If $(a_1,b_1)=(2\alpha-1, 2\beta)$, then $8(d_1+d_2+d_3)/(3^{2\alpha}5^{2\beta}) > 8d_1/(3^{2\alpha}5^{2\beta}) > 8/3 >2$, contradicting \eqref{A2}.
	If $(a_1,b_1)=(2\alpha,2\beta-1)$, then $d_2\geq5, d_3\geq3$, and
	\begin{align*}
		0&=8\left(\sigma (n)-2n+d_1+d_2+d_3\right) = 8(d_1+d_2+d_3)-3^{2\alpha}5^{2\beta}-3^{2\alpha+1}-5^{2\beta+1}+1\\
		&\geq3^{2\alpha+1}5^{2\beta-1}-3^{2\alpha+1}-5^{2\beta+1}+65\\
		&= (3^{2\alpha+1}-25)(5^{2\beta-1}-1)+40>0,
	\end{align*}
	which is not possible. Suppose $(a_1,b_1)=(2\alpha-2,2\beta)$. Since $a_1 \neq0$, we have $\alpha \geq2$. If $\beta \geq 2$, then \eqref{A2} implies
	\begin{align*}
		1&< \frac{8}{3^{2\alpha}5^{2\beta}}( 3^{2\alpha-2}5^{2\beta}+5^{b_2^\prime} + 3^{a_3})\\
		&\leq  \frac{8}{3^{2\alpha}5^{2\beta}}\left( 3^{2\alpha-2}5^{2\beta}+5^{2\beta-1}+3^{2\alpha} \right) \leq 8 \left( \frac{1}{9}+ \frac{1}{3^4\cdot5}+ \frac{1}{5^4}\right)<1,
	\end{align*}
	which is false. So $\beta=1$. Then $d_2=5$ and $d_3=3$. 
	
	Starting from $8\left(\sigma(n)-2n+d_1+d_2+d_3\right)=0$ and then simplifying leads to $13\cdot3^{2\alpha-2}+15=0$, which is impossible. The last case of \eqref{A3} is $(a_1,b_1)=(2\alpha-1,2\beta-1)$. If $\alpha \geq 2$ or $\beta \geq 2$, then \eqref{A2} implies 
	\begin{align*}
		1&<  \frac{8}{3^{2\alpha}5^{2\beta}} \left( 3^{2\alpha-1} 5^{2\beta-1}+5^{b_2^\prime}+3^{a_3}\right) \\
		&\leq  8\left(  \frac{1}{15}+ \frac{1}{3^{2\alpha}\cdot 5}+ \frac{1}{5^{2\beta}}\right) \\
		&\leq  8\left(  \frac{1}{15}+\max \left\{  \frac{1}{3^4\cdot 5}+ \frac{1}{5^2}, \frac{1}{3^2\cdot 5}+ \frac{1}{5^4}\right\}\right) <1,
	\end{align*}
	which is not ture. Thus $\alpha=\beta=1$. So $a_1=b_1=1$, $d_2=5$, and $d_3=3$. Now it is easy to verify that $\sigma(n)-2n+d_1+d_2+d_3=-24\neq 0$. So there is no solution in this case.
	
	\indent\textbf{Case 8.3} By \eqref{A2}, we obtain $1<8(3d_1)/(3^{2\alpha}5^{2\beta}) \leq 24\cdot5^{2\beta-1}/(3^{2\beta}5^{2\beta}) \leq 24/45 <1 $, a contradiction.
	
	\indent\textbf{Case 8.4} By \eqref{A2}, we obtain 
	\begin{align*}
	1&< \frac{8}{3^{2\alpha}5^{2\beta}}(d_1+2d_2)= \frac{8}{3^{2\alpha}5^{2\beta}}(5^{b_1^{\prime\prime}}+2\cdot 5^{b_2^{\prime\prime}}) \\
	&\leq  \frac{8}{3^{2\alpha}5^{2\beta}}(5^{2\beta}+2\cdot 5^{2\beta-2})\leq 8\left(  \frac{1}{9}+ \frac{2}{9\cdot25} \right) <1,
	\end{align*}
	which is not possible.
	
	\indent\textbf{Case 8.5} If $\alpha \geq2$, then \eqref{A2} implies that 
	$$1< \frac{8}{3^{2\alpha}5^{2\beta}}(2d_1+d_3) \leq \frac{8}{3^{2\alpha}5^{2\beta}}(2\cdot 5^{2\beta}+5^{2\beta-2}) \leq 8\left(  \frac{2}{3^4}+ \frac{1}{3^4\cdot 5^2} \right) <1,$$ 
	which is false. Therefore $\alpha=1$. Then the left-hand side of \eqref{A1} is $\equiv 4 \Mod{5}$, while the right-hand side is $\equiv 2 (d_1+d_2+d_3) \equiv 2(3^{a_2}5^{b_2}+5^{b_3^{\prime\prime}}) \Mod{5}$. By \eqref{A4}, $b_2=0$ or $b_3=0$. If $b_2=0$ and $b_3\neq 0$, then $5^2 \leq d_3 <d_2 =3^{a_2}$, and so $a_2 \geq3$ contradicting the fact that $d_2 \mid n$ and $n=3^{2\alpha}5^{2\beta}=3^2 \cdot 5^{2\beta}$. If $b_2 \neq 0$ and $b_3=0$, then $2(3^{a_2}5^{b_2}+5^{b_3^{\prime\prime}})\equiv 2 \Mod{5}$, which is not the case. Since $\alpha =1$, $a_2 \in \{1,2\}$. So if $b_2=b_3=0$, then $2(3^{a_2}5^{b_2}+5^{b_3^{\prime\prime}}) \equiv3,0 \Mod{5}$, which is not true. So there is no solution in this case.
	
	\indent\textbf{Case 8.6} Since $5^{b_2^{\prime\prime}}=d_2>d_3\geq1$, we have $b_2^{\prime\prime} \neq0$. By \eqref{A4}, we see that $b_3=0$. Then the right-hand side of \eqref{A1} is $\equiv 2(3^{a_1}5^{b_1}+5^{b_2^{\prime\prime}}+5^{b_3^{\prime\prime}})\equiv 2 \Mod{5}$ contradicting \eqref{LHSmod5}. 
	
	This completes the proof of this theorem.
\end{proof}


\begin{thebibliography}{99}

\bibitem{BGLLHT} K. A. Broughan, M. J. Gonzalez, R. H. Lewis, F. Luca, V. J. M. Huguet, and A. Togbe, \emph{There are no multiply-perfect Fibonacci numbers}, Integers, \textbf{11A} (2011), Article 7

\bibitem{BLMS} Y. Bugeaud, F. Luca, M. Mignotte, and S. Siksek, \emph{On Fibonacci numbers with few prime divisors}, Proc. Japan Acad. Ser. A Math Sci., \textbf{81} (2005), 17--20. 

\bibitem{CCZ} T. Cai, D. Chen, and Y. Zhang, \emph{Perfect numbers and Fibonacci prime (I)}, Int. J. Number Theory, \textbf{11}(1) (2015), 159--169.
	\bibitem{Chen} F. J. Chen, \textit{On exactly $k$-deficient-perfect numbers}, Integers, \textbf{19} (2019), Article A37, 1--9.
	
	\bibitem{C80} G. L. Cohen, \textit{On odd perfect numbers (II), multiperfect numbers and quasiperfect numbers}, J. Austral. Math. Soc. Ser. A \textbf{29} (1980), 369--384.
	
	\bibitem{Co} G. L. Cohen, \textit{The nonexistence of quasiperfect numbers of certain forms}, Fibonacci Quart., \textbf{20} (1982), 81--84.
	
	\bibitem{Cohen} P. Cohen, K. Cordwell, A. Epstein, C. H. Kwan, A. Lott, and S. J. Miller, \emph{On near perfect numbers}, (2019) https://arxiv.org/pdf/1610.04253.pdf.
	
	\bibitem{Cub} P. Cubre and J. Rouse, \emph{Divisibility properties of the Fibonacci entry point}, Proc. Amer. Math. Soc., \textbf{142}(11) (2014), 3771--3785. 
	
	\bibitem{DPT} L. Dai, H. Pan, and C. Tang, \emph{Note on odd multiperfect numbers}, Bull. Aust. Math. Soc., \textbf{87} (2013), 448--451.
	
	\bibitem{GIMPS} Great Internet Mersenne Prime Search (GIMPS), http://www.mersenne.org/.
	
	\bibitem{Ha} P. Hagis and G. L. Cohen, \textit{Some results concerning quasiperfect numbers}, J. Austral. Math. Soc. Ser. A \textbf{33} (1982), 275--286.
	
	\bibitem{JPo} M. Jaidee and P. Pongsriiam, \emph{Arithmetic functions of Fibonacci and Lucas numbers}, Fibonacci Quart., \textbf{57}(3) (2019), 246--254. 

\bibitem{Khan} R. Khan, \emph{The divisor function in arithmetic progressions modulo prime powers}, Mathematika, \textbf{62} (2016), 898--908.

	\bibitem{Kis} M. Kishore, \textit{Odd integers $N$ with five distinct prime factors for which $2-10^{-12}<\sigma (N)/N<2+10^{-12}$}, Math. Comp., \textbf{32} (1987), 303--309.
	
	\bibitem{LQu} Y. Li and Q. Liao, \emph{A class of new near-perfect numbers}, J. Korean Math. Soc., \textbf{52}(4) (2015), 751--763.

\bibitem{LSZ} K. Liu, I. Shparlinski, and T. Zhang, \emph{Divisor problem in arithmetic progressions modulo a prime power}, Adv. Math., \textbf{325} (2018), 459--481.

\bibitem{LYo} F. Luca and P. T. Young, \emph{On the number of divisors of $n!$ and of the Fibonacci numbers}, Glas. Mat. Ser. III, \textbf{47} (2012), 285--293.

	\bibitem{OcRa} P. Ochem and M. Rao, \textit{Odd perfect numbers are greater than $10^{1500}$}, Math. Comp., \textbf{81} (2012), 1869--1877.
	
	\bibitem{OPo} K. Onphaeng and P. Pongsriiam, \emph{The converse of exact divisibility by powers of the Fibonacci and Lucas numbers}, Fibonacci Quart., \textbf{56}(4) (2018), 296--302. 

\bibitem{PPo} P. Phunphayap and P. Pongsriiam, \emph{Explicit formulas for the $p$-adic valuations of Fibonomial coefficients}, J. Integer Seq., \textbf{21}(3) (2018), Article 18.3.1, 1--33.  

	\bibitem{Pol} P. Pollack and V. Shevelev, \textit{On perfect and near-perfect numbers}, J. Number Theory, \textbf{132} (2012), 3037--3046.
	
	\bibitem{Pon19.1} P. Pongsriiam, \emph{Fibonacci and Lucas numbers which have exactly three prime factors and some unique properties of $F_{18}$ and $L_{18}$}, Fibonacci Quart., \textbf{57}(5) (2019), 130--144. 

\bibitem{Pon19.2} P. Pongsriiam, \emph{The order of appearance of factorials in the Fibonacci sequence and certain Diophantine equations}, Period. Math. Hungar., \textbf{79}(2) (2019), 141--156.

\bibitem{Pon14.1} P. Pongsriiam, \emph{Exact divisibility by powers of the Fibonacci and Lucas numbers}, J. Integer Seq., \textbf{17}(11) (2014), Article 14.11.2, 1--12. 

\bibitem{PVa} P. Pongsriiam and R.C. Vaughan, \emph{The divisor function on residue classes I}, Acta Arith., \textbf{168}(4) (2015), 369--381.

\bibitem{PVa2} P. Pongsriiam and R. C. Vaughan, \emph{The divisor function on residue classes II}, Acta Arith., \textbf{182}(2) (2018), 133--181.
	
	\bibitem{RenC} X. Z. Ren and Y. G. Chen, \textit{On near-perfect numbers with two distinct prime factors}, Bull. Aust. Math. Soc., \textbf{88} (2013), 520--524.
	
	\bibitem{SPa} M. K. Sahukar and G. K. Panda, \emph{Arithmetic functions of balancing numbers}, Fibonacci Quart., \textbf{56}(3) (2018), 246--251.
	
	\bibitem{Si} W. Sierpi\'{n}ski, \textit{Sur les nombres pseudoparfaits}, Mat. Vesnik, \textbf{17} (1965), 212--213.
	
	\bibitem{Slo} N. J. A. Sloane, The Online Encyclopedia of Integer Sequences, http://oeis.org/.
	
	\bibitem{Ste} C. L. Stewart, \emph{On divisors of Lucas and Lehmer numbers}, Acta Math., \textbf{211}(2) (2013), 291--314.
	
	\bibitem{SunHe} C. F. Sun and Z. C. He, \emph{On odd deficient-perfect numbers with four distinct prime divisors}, (2019) https://arxiv.org/pdf/1908.04932.pdf.
	
	\bibitem{TF} M. Tang and M. Feng, \textit{On deficient-perfect numbers}, Bull. Aust. Math. Soc., \textbf{90} (2014), 186--194.
	
	\bibitem{TMF} M. Tang, X. Ma, and M. Feng, \textit{On near-perfect numbers}, Colloq. Math., \textbf{144} (2016), 157--188.
	
	\bibitem{TRL} M. Tang, X. Z. Ren, and M. Li, \textit{On near-perfect and deficient-perfect numbers}, Colloq. Math., \textbf{133} (2013),  221--226.
	
	\bibitem{Yam} T. Yamada, \textit{Quasiperfect numbers with the same exponent}, Integers, \textbf{19} (2019), Article A35, 1--11.
\end{thebibliography}
\end{document}